\documentclass[12pt]{article}  
\usepackage{amsmath} 
\usepackage{latexsym}

\usepackage{url}
\usepackage{amssymb} 
\usepackage{exscale} 
\usepackage{graphicx} 
\usepackage{makeidx}

\usepackage{array}
\newcolumntype{C}[1]{>{\centering\let\newline\\\arraybackslash\hspace{0pt}}m{#1}}

\newtheorem{thm}{Theorem}
\newtheorem{cor}[thm]{Corollary}
\newtheorem{con}[thm]{Conjecture}

\newtheorem{lem}[thm]{Lemma}

\newenvironment{proof}[1][Proof]
{\par\noindent{\bf #1.} }{\hspace*{\fill}\nolinebreak{$\Box$}\bigskip\par}

\title{\bf On Some Three-Color\\
Ramsey Numbers for Paths}

\author{
Janusz Dybizba\'{n}ski\footnote{Supported by Polish
National Science Centre contract DEC-2012/05/N/ST6/03063},
Tomasz Dzido\footnote{Supported by Polish National
Science Centre grant 2011/02/A/ST6/00201}\\
\small Institute of Informatics, University of Gda\'{n}sk\small \\[-0.8ex]
\small Wita Stwosza 57, 80-952 Gda\'{n}sk, Poland\\[-0.8ex]
\small \texttt{\{jdybiz,tdz\}@inf.ug.edu.pl}\\
\\[-2ex]
and\\[-2ex]
\\
Stanis\l aw Radziszowski\footnotemark[\value{footnote}]\\
\small Department of Computer Science \\[-0.8ex]
\small Rochester Institute of Technology \\[-0.8ex]
\small Rochester, NY  14623, USA \\[-0.8ex]
\small \texttt{spr@cs.rit.edu}\\[-3ex]
}

\date{}

\begin{document}
\maketitle
\begin{abstract}
\noindent
For graphs $G_1, G_2, G_3$,
the three-color Ramsey number $R(G_1,$ $G_2, G_3)$
is the smallest integer $n$ such that if we arbitrarily color
the edges of the complete graph of order $n$ with $3$ colors,
then it contains a monochromatic copy of $G_i$ in color $i$,
for some $1 \leq i \leq 3$.

First, we prove that the conjectured equality
$R(C_{2n},C_{2n},C_{2n})=4n$, if true, implies that
$R(P_{2n+1},P_{2n+1},P_{2n+1})=4n+1$ for all $n \ge 3$.
We also obtain two new exact values $R(P_8,P_8,P_8)=14$
and $R(P_9,P_9,P_9)=17$,
furthermore we do so without help of computer algorithms.
Our results agree with a formula $R(P_n,P_n,P_n)=2n-2+(n\bmod 2)$
which was proved for sufficiently large $n$ by Gy\'arf\'as,
Ruszink\'{o}, S\'{a}rk\"{o}zy, and Szemer\'{e}di in 2007.
This provides more evidence for the conjecture that
the latter holds for all $n \ge 1$.
\end{abstract}

\section{Definitions}

In this paper all graphs are undirected, finite and
contain neither loops nor multiple edges. Let $G$ be such a graph.
The vertex set of $G$ is denoted by $V(G)$, the edge set of $G$
by $E(G)$, and the number of edges in $G$ by $e(G)$. For any edge
coloring $F$ of a complete graph, $F^{i}$ will denote the graph
induced by the edges of color $i$ in $F$. Let $P_k$ (resp. $C_k$)
be the path (resp. cycle) on $k$ vertices.
The \emph{circumference} $c(G)$ of a graph $G$ is
the length of its longest cycle.

\emph{The Tur{\'a}n number} $T(n,G)$ is the maximum number of
edges in any $n$-vertex graph that does not contain any subgraph
isomorphic to $G$. A graph on $n$ vertices is said to be
\emph{extremal with respect to $G$} if it does not contain
a subgraph isomorphic to $G$ and has exactly $T(n,G)$ edges.

\medskip
For given graphs $G_{1}, G_{2}, ... , G_{k}, k \geq 2$,
the \emph{multicolor Ramsey number} $R(G_{1}, G_{2}, ... , G_{k})$
is the smallest integer $n$ such that if we arbitrarily color
with $k$ colors
the edges of the complete graph of order $n$, $K_n$,
then it contains a monochromatic copy of $G_{i}$ in color $i$,
for some $1 \leq i \leq k$. A coloring of the edges of $K_n$
with $k$ colors is called a
$(G_1, G_2, ..., G_k ; n)$-coloring, if it does not
contain a subgraph isomorphic to $G_i$ in color $i$
for any $1\le i\le k$. In the diagonal cases, when
for all $i$, $1\le i\le k$, $G_i=G$ for some graph $G$,
we will write $R(G_{1}, G_{2}, ... , G_{k})=R_k(G)$.
Finally, we will refer to the first three colors of such Ramsey
colorings as red, blue and green, respectively.

\section{Overview}

In this article we study the values of three-color diagonal
Ramsey numbers for paths. In the case of two color Ramsey numbers,
a well known theorem of Gerencs\'er and Gy\'arf\'as \cite{GeGy}
states that $R(P_n,P_m)= n+\lfloor \frac{m}{2} \rfloor -1$
for $n \geq m \geq 2$.

\medskip
Clearly, we have $R_3(P_1)=1$ and $R_3(P_2)=2$. 
The cases $R_3(P_3)=5$ and $R_3(P_4)=6$ are easy but need
some thought, while the results $R_3(P_5)=9$, $R_3(P_6)=10$
and $R_3(P_7)=13$ already required help of computer algorithms
(see section 6.4.1 of \cite{Rad} for details and references
to these and other related cases). The first open cases are
those of $R_3(P_8)$ and $R_3(P_9)$, which are determined
later in this paper.
All known values agree with a very remarkable result
obtained by Gy\'arf\'as, Ruszink\'{o}, S\'{a}rk\"{o}zy,
and Szemer\'{e}di in 2007 \cite{GRSS} formulated as follows.

\begin{thm}[\cite{GRSS}]
For all sufficiently large $n$, we have
$$
R_3(P_n) = 
\left \{
\begin{array}{ll}
2n - 1 & \textrm{ for odd } n,\\
2n - 2 & \textrm{ for even } n.
\end{array}
\right. \eqno{(1)}
$$
\end{thm}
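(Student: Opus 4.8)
The plan is to treat the two bounds separately: the lower bound via explicit $3$-colorings, and the (harder) upper bound via \L uczak's connected-matching method powered by Szemer\'edi's Regularity Lemma. For the lower bound with odd $n = 2k+1$, I would partition $V(K_{2n-2}) = V(K_{4k})$ into four blocks $B_1,\dots,B_4$ of size $k$ and color the between-block edges by the three parallel classes of the affine plane of order $2$: red on $B_1B_2$ and $B_3B_4$, blue on $B_1B_3$ and $B_2B_4$, green on $B_1B_4$ and $B_2B_3$. Each color is then a disjoint union of two copies of $K_{k,k}$, whose components span only $2k = n-1$ vertices and so contain no $P_n$; the edges inside each block may be absorbed into any single color without enlarging the relevant component beyond $2k$ vertices. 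This gives $R_3(P_n)\ge 2n-1$. For even $n=2k$ the same affine pattern on $2n-3$ vertices, but with block sizes $k-1,k-1,k-1,k$, keeps every monochromatic component on at most $2k-1=n-1$ vertices and yields $R_3(P_n)\ge 2n-2$. The parity of $n$ is precisely what sets the block sizes and produces the off-by-one between the two cases.

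For the upper bound I would color $K_N$ arbitrarily with $N$ equal to the claimed value, apply the Regularity Lemma, and pass to the cluster graph $R$ on $m$ clusters in which each regular pair receives its majority color. The crux of the method is that a monochromatic \emph{connected matching} in $R$ of size $s$ --- a matching whose edges all lie in one monochromatic component --- lifts, through the connecting property of regular pairs, to a monochromatic path on about $2s\cdot(N/m)$ vertices of the original coloring. Since a $P_n$ with $n\approx N/2$ corresponds to a connected matching on roughly half of the clusters, the whole problem reduces to the following purely combinatorial statement, whose parity-sensitive sharp form is what must be established.

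\begin{claim}
Every $3$-coloring of $K_m$ contains a monochromatic connected matching covering at least $(1-o(1))\,m/2$ of the vertices, the precise number depending on the parity of $m$.
\end{claim}

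To prove the claim I would begin from Gy\'arf\'as's theorem that any $3$-coloring of $K_m$ has a monochromatic component on at least $m/2$ vertices --- a bound whose extremal configuration is exactly the affine-plane coloring used above --- so that a large connected \emph{subgraph} is free; the task is to extract a near-perfect matching inside it. I expect this to demand a stability dichotomy: either the coloring is close to the affine pattern, whereupon a direct analysis of the at most three large monochromatic components supplies the matching, or it is far from it, whereupon the surplus cross-component edges build an even larger connected matching. The main obstacle is exactly this step: upgrading ``large monochromatic component'' to ``large monochromatic connected matching'' while controlling the additive constant and its parity dependence tightly enough to make the bound exact rather than merely asymptotic. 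The stability analysis, not the regularity machinery, is where the real difficulty lies, and it is also what forces the hypothesis that $n$ be sufficiently large.
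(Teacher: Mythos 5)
The first thing to note is that the paper does not prove this statement at all: Theorem 1 is quoted from \cite{GRSS}, the authors explicitly call its proof ``very long and complicated,'' and the only fragment they reproduce is the lower-bound construction described after Conjecture 2. Against that, the lower-bound half of your proposal checks out completely and is in fact the same construction: your affine-plane coloring of the four blocks is exactly the blow-up of the $2K_2$-factorization of $K_4$, with block sizes $k,k,k,k$ for odd $n=2k+1$ on $4k=2n-2$ vertices and $k-1,k-1,k-1,k$ for even $n=2k$ on $4k-3=2n-3$ vertices; every monochromatic component is confined to two blocks, hence to at most $n-1$ vertices, giving $R_3(P_n)\ge 2n-1$, respectively $R_3(P_n)\ge 2n-2$. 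That part is correct and complete.

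The upper bound, however, is a plan rather than a proof, and the gap sits precisely at the step you defer. Your Claim --- a monochromatic connected matching covering $(1-o(1))\,m/2$ vertices in every $3$-coloring of $K_m$ --- is the Figaj--{\L}uczak-type lemma, and fed through the regularity machinery it yields only the asymptotic statement $R_3(P_n)=(2+o(1))\,n$. It cannot yield the exact values $2n-1$ and $2n-2$: every $o(1)$ loss in the matching size, and every loss incurred in lifting a connected matching to a path through regular pairs, costs a number of vertices proportional to $n$, which swamps the additive constants that distinguish the two parities. What the proof in \cite{GRSS} actually requires is a stability argument carried out at the level of the original coloring, not merely the cluster graph: either the $3$-coloring of $K_N$ is far from the blow-up pattern, in which case the connected-matching route has enough slack, or it is close to that pattern, in which case a separate, direct, non-regularity analysis must produce the monochromatic $P_n$ exactly. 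You name this dichotomy but carry out neither branch, and the extremal branch in particular is not a routine strengthening of Gy\'arf\'as's component theorem --- it is the bulk of the work in \cite{GRSS}. So the proposal correctly reconstructs the architecture of the known proof, including where its difficulty lies, but leaves that essential content unproven.
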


\medskip
The proof of Theorem 1 is very long and complicated. Our attempts to
extract from it any reasonable bound on how large $n$ should
be for (1) to hold, failed. Actually, Faudree and Schelp \cite{fas},
already in 1975, stated that ``they feel" that (1) holds for all $n$.
They did so when considering more general cases of $R(P_m,P_n,P_k)$
for paths of different lengths. We believe that the diagonal
case deserves the status of a conjecture.

\begin{con}[\cite{fas}]
$R_3(P_n)=2n-2+(n\bmod 2)$ holds for all $n \ge 1$.
\end{con}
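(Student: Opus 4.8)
The statement is an exact identity, so the plan is to establish the two inequalities $R_3(P_n)\ge 2n-2+(n\bmod 2)$ and $R_3(P_n)\le 2n-2+(n\bmod 2)$ separately, and then to be explicit about where the genuine difficulty—and the reason this remains a conjecture rather than a theorem—actually lies. The lower bound is constructive and comparatively routine; the upper bound is the crux and is where I expect to get stuck.

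For the lower bound I would exhibit an explicit $(P_n,P_n,P_n;N)$-coloring on $N=2n-3+(n\bmod 2)$ vertices, generalizing the extremal coloring behind the Gerencs\'er--Gy\'arf\'as theorem. For odd $n$ this is clean: split the $2n-2$ vertices into two halves $X,Y$ of size $n-1$, color all edges inside $X$ and inside $Y$ red (two disjoint $K_{n-1}$'s, hence $P_n$-free by the Erd\H{o}s--Gallai extremal configuration), and partition the remaining complete bipartite graph between $X$ and $Y$ into two colors by further splitting $X=X_1\cup X_2$, $Y=Y_1\cup Y_2$ into halves and taking blue $=(X_1,Y_1)\cup(X_2,Y_2)$ and green $=(X_1,Y_2)\cup(X_2,Y_1)$. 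Each of blue and green is then a disjoint union of two balanced complete bipartite graphs $K_{(n-1)/2,(n-1)/2}$, whose longest path has only $n-1<n$ vertices. For even $n$ one deletes a single vertex and adjusts the block sizes by one; the only delicate point is this parity bookkeeping for the $(n\bmod 2)$ term, and it is routine.

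The upper bound is where everything is hard, and the first thing to record is that the obvious counting argument fails badly. By the Erd\H{o}s--Gallai theorem a $P_n$-free graph on $N$ vertices has at most $(n-2)N/2$ edges, so three $P_n$-free color classes together carry at most $3(n-2)N/2$ edges, and this is forced to exceed $\binom{N}{2}$ unless $N\ge 3n-4$, roughly $50\%$ above the conjectured threshold $\approx 2n$. The extremal $P_n$-free graphs are disjoint copies of $K_{n-1}$, and three such near-optimal packings simply cannot tile $K_N$ once $N$ drops to $2n$; capturing this incompatibility is precisely what a correct argument must do but a pure edge count cannot. The realistic route is therefore structural: assume a $(P_n,P_n,P_n;2n-2+(n\bmod 2))$-coloring exists, take a longest monochromatic path, and use connectivity together with the near-tightness of Erd\H{o}s--Gallai to force one color class to be close to a disjoint union of cliques, then extract a contradiction from how the two remaining colors are compelled to fill the cross-edges—the stability phenomenon underlying Theorem~1.

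The main obstacle, and the reason Conjecture~2 is open, is that the stability/regularity proof of Theorem~1 delivers the equality only for unspecified ``sufficiently large'' $n$ and carries no effective bound. A complete proof thus requires either extracting an explicit $n_0$ from that argument and then closing the finite remaining range by the hand and computer methods already available for $n\le 9$, or else replacing the regularity step by an elementary stability argument valid for every $n$. For odd $n$ one may instead try to route through cycles, since (as shown later in this paper) the even-cycle conjecture $R(C_{2m},C_{2m},C_{2m})=4m$ implies $R_3(P_{2m+1})=4m+1$; but this only trades one open problem for another. I expect the decisive and hardest step to be exactly this uniform control over all intermediate $n$, for which no current technique suffices.
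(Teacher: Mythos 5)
This statement is Conjecture~2, not a theorem: the paper offers no proof of it, and your proposal correctly treats it as unprovable by current means, which is exactly the right call. What you do prove --- the lower bound --- is essentially the paper's own construction in disguise: your blocks $X_1,X_2,Y_1,Y_2$ are the four blown-up vertices of the $3$-edge-colored $K_4$, with the within-block edges all assigned to red (the paper allows them to be colored arbitrarily), and your parity bookkeeping for even $n$ matches the paper's unbalanced blow-up $3(m-1)+m$; your assessment of the upper bound (Erd\H{o}s--Gallai counting only gives roughly $3n-4$, the GRSS proof is ineffective in $n$, and the cycle route of Theorem~6 merely trades one conjecture for another) likewise mirrors the paper's own discussion. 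So: same approach, correctly identified as incomplete, with the paper's actual contribution being the new small cases $R_3(P_8)=14$ and $R_3(P_9)=17$ rather than any attack on the general upper bound.
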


Let $N=2n-3+(n\bmod 2)$.
It is known that $R_3(P_n) > N$ since one can find a
$(P_n,P_n,P_n;N)$-coloring for all $n \ge 1$. For $n=1,2$, these
are on an empty and one-element set of vertices, respectively.
For $n=3$ and $N=4$,
the partition of the edges of $K_4$ into 3
matchings $2K_2$ in 3 distinct colors gives a witness coloring
for $R_3(P_3)>4$.
For the general case, one can obtain a $(P_n,P_n,P_n;N)$-coloring
by using a ``blow-up" of this factorization of $K_4$ as follows.
For odd $n=2m-1 \ge 5$, a witness coloring
for $R_3(P_n)>N=4m-4$ can be obtained by blowing up each
vertex of such colored $K_4$ into a set of $m-1$ vertices,
and coloring the edges within the new 4 sets arbitrarily.
Similarly for $n=2m$, a witness coloring for $R_3(P_n)>4m-3$
can be obtained by blowing up three vertices of $K_4$ to $m-1$
vertices, and one to $m$ vertices
(for more details see \cite{GRSS}).

\medskip
It is interesting to compare (1) to the conjectured
values of three-color diagonal Ramsey numbers for cycles.

\begin{con}[\cite{erd}\cite{dzi}]
$$
R_3(C_n) = 
\left \{
\begin{array}{ll}
4n - 3 & \textrm{ for odd\ \ } n \ge 5,\\
2n & \textrm{ for even } n \ge 6.
\end{array}
\right. \eqno{(2)}
$$
\end{con}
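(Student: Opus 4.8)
The plan is to prove the two parities separately, since the conjectured values $4n-3$ for odd $n$ and $2n$ for even $n$ arise from structurally different extremal colorings. For each parity I would establish a lower bound by exhibiting an explicit critical coloring and then a matching upper bound showing that one extra vertex forces a monochromatic $C_n$.

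For the lower bounds the odd case is clean. On $4n-4$ vertices, split into two blocks $A,B$ of size $2n-2$, put on each block an optimal $2$-coloring of $K_{2n-2}$ with no monochromatic odd cycle of length $n$ (two disjoint cliques $K_{n-1}$ in red, a complete bipartite $K_{n-1,n-1}$ in blue), and color every edge between $A$ and $B$ green. The green class is then bipartite and so has no odd cycle; the red class is a disjoint union of cliques $K_{n-1}$, too small to contain $C_n$; and the blue class is a disjoint union of complete bipartite graphs, again with no odd cycle. This certifies $R_3(C_n)>4n-4$. For even $n$ the target is a $(C_n,C_n,C_n;2n-1)$-coloring, which I would seek among partition-based constructions chosen so that no color class contains a cycle of length $n$, verifying the condition directly.

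For the upper bounds, which are the hard part, I would use the regularity method. Given a $3$-coloring $F$ of $K_N$ at the critical order, apply Szemer\'edi's Regularity Lemma, pass to the reduced cluster graph, and majority-color its edges to obtain a $3$-colored almost-complete graph $H$. The decisive tool is the connected-matching method: a sufficiently large monochromatic connected matching in $H$ lifts, through the regular pairs, to a monochromatic $C_n$ in $F$, with the caveat that producing an \emph{odd} cycle $C_n$ requires this matching to lie in a non-bipartite monochromatic component of $H$. This reduces each upper bound to the extremal question of how large a monochromatic connected matching (odd-connected, for odd $n$) must appear in every $3$-coloring of an almost-complete graph, which I would answer by a stability argument: assume no such matching exists, deduce that $H$, and hence $F$, must closely resemble one of the lower-bound constructions, and then rule those configurations out at the exact threshold.

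The main obstacle is precisely this stability analysis for connected matchings, and it is genuinely harder in the odd case, where the coloring achieving $4n-3$ has more near-extremal relatives than the single family behind $2n$, all of which must be controlled simultaneously while tracking the parity of the connecting components. A second, unavoidable obstacle is that the regularity reduction is asymptotic and only settles the conjecture for sufficiently large $n$; pushing the bound down to $n\ge 5$ (odd) and $n\ge 6$ (even) would require either regularity-free combinatorial arguments for the intermediate range or a finite computer search for the small residual cases, paralleling what was already needed for the small values $R_3(P_k)$ discussed above.
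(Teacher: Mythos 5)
This statement is Conjecture 3 of the paper; it is not a theorem, and the paper contains no proof of it. What the paper does record is that (2) is known only for all sufficiently large $n$ (Kohayakawa--Simonovits--Skokan for odd $n$, Benevides--Skokan for even $n$), that the small exact values $R_3(C_6)=12$ and $R_3(C_8)=16$ needed computer algorithms, and that $R_3(C_9)$ and $R_3(C_{10})$ are the first open cases. Your proposal, read as a proof of the statement as written (all odd $n\ge 5$, all even $n\ge 6$), therefore has a genuine and fatal gap, and you in effect concede it yourself: the regularity/connected-matching/stability machinery you invoke is exactly the method of the cited asymptotic papers, and it intrinsically yields the result only for $n\ge n_0$ with no effective $n_0$ (the paper emphasizes that no reasonable bound can be extracted). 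Your closing remark that the intermediate range could be handled by ``regularity-free combinatorial arguments or a finite computer search'' is not a proof step but a restatement of the open problem: since $n_0$ is not explicit, the ``residual'' range is not finite or even identified, and the historical evidence (computer-aided proofs already required at $n=6,8$) shows this range is not routine. So the proposal is a competent survey of how the known partial results were obtained, not a proof of the conjecture.

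Two smaller points. Your odd-case lower bound is correct and is the standard Bondy--Erd\H{o}s construction: red a union of four cliques $K_{n-1}$, blue a union of two copies of $K_{n-1,n-1}$, green the bipartite graph between the halves, so the two bipartite colors have no odd cycle and the red cliques are too small; this does give $R_3(C_n)\ge 4n-3$ for odd $n$. But for even $n$ you give no construction at all --- ``seek among partition-based constructions'' is a placeholder, not an argument --- so even the easy direction $R_3(C_n)\ge 2n$ is unproven in your write-up. If you want to contribute in the spirit of this paper, the realistic target is not Conjecture 3 itself but its consequences and small cases: note how the paper uses the even-cycle case of (2) only as a hypothesis (Theorem 6: $R_3(C_{2n})=4n$ implies $R_3(P_{2n+1})=4n+1$), precisely because the conjecture itself is out of reach.
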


\medskip
The odd case was conjectured by Bondy and Erd\H os in 1981 \cite{erd}, 
while the even case by the second author in 2005 \cite{dzi}. Like
with (1) for paths, (2) is known to hold for all sufficiently large $n$.
For the odd $n$ odd case, this result and an outline of the proof was
described by Kohayakawa, Simonovits and Skokan in 2005 \cite{KoSS},
and the full proof by the same authors is to appear \cite{KoSS2}.  
The case for even $n$ was settled by Benevides
and Skokan in 2009 \cite{BenSk}. These results followed the exact
asymptotic results obtained by {\L}uczak and others (see also section
6.3.1 of \cite{Rad} for details and references to other related cases).
We know that (2) holds for all $n \ge n_0$ for some $n_0$,
though there seems to be no easy way to find any concrete
upper bound on $n_0$. The first open cases of Conjecture 3
are those of $R_3(C_9)$ and $R_3(C_{10})$.

\medskip
In section 4 we will prove an interesting implication that
the even $n$ case of (2) implies the odd $(n+1)$
case of (1) for $n \ge 6$.
The equalities $R_3(C_6)=12$ \cite{yr} and $R_3(C_8)=16$
\cite{suny} were obtained with the help of computer algorithms.
Thus, it will imply that $R_3(P_7)=13$ and $R_3(P_9)=17$. 
We will also provide a computer-free proof of the latter.
Finally, we prove that $R_3(P_8)=14$, which leaves
$R_3(P_{10})$ as the first open case of (1).

\medskip
\section{Background Results}

Gy\'arf\'as, Rousseau and Schelp \cite{GRS} completely solved
the question of what is the maximum number of edges $g(m,n,k)$
in any $P_k$-free subgraph of the complete bipartite graph
$K_{m,n}$. They also characterized all the corresponding
extremal graphs. Tables III and IV in \cite{GRS} present
formulas for $g(m,n,k)$ for even and odd $k$, respectively, and
Tables I and II therein describe the constructions of
the extremal graphs achieving $g(m,n,k)$.
In our proofs of
sections 4 and 5 we will refer to these tables several times,
and hence they are reproduced in Appendix A
(with some additional comments) for convenience of the readers.

\medskip
Also in the proofs we will need some values of Tur{\'a}n numbers
for paths. In order to determine the required $T(n,P_k)$,
the following theorem by Faudree and Schelp \cite{fas},
which enhances and condenses the results by
Erd\H os and Gallai \cite{EG}, will be used.
For vertex-disjoint graphs $H_1=(V_1,E_1)$
and $H_2=(V_2,E_2)$, the join $H_1+H_2$ is a graph on
vertices $V_1 \cup V_2$ with the set of edges equal to
$E_1 \cup E_2 \cup \{ \{u,v\} \ |\  u \in V_1, v \in V_2\}$.

\bigskip
\begin{thm}[\cite{fas}\cite{EG}] 
If $G$ is a graph with $|V(G)|=kt+r$, $r<k$, $0 \le t,r$,
containing no $P_{k+1}$, then
$|E(G)|\leq t\binom{k}{2}+\binom{r}{2}$ with equality
if and only if $G$ is either $(tK_{k}) \cup K_{r}$ or
$((t-l-1)K_{k}) \cup (K_{(k-1)/2} + \overline{K}_{(k+1)/2+lk+r})$
for some $0\leq l < t$ when $k$ is odd, $t>0$,
and $r=(k \pm 1)/2$.
\label{twturan}
\end{thm}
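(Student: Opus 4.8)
The plan is to prove both the bound and the equality characterization together by induction on $n=|V(G)|$. The base cases $n\le k$ are immediate: there $t\le 1$ and the bound reduces to $\binom{n}{2}$, attained only by $K_n$, which is trivially $P_{k+1}$-free for want of vertices and is the unique extremal graph (the first family with an empty remainder). The plain inequality $e(G)\le t\binom{k}{2}+\binom{r}{2}$ is essentially the classical Erd\H os--Gallai estimate sharpened on the remainder term, so the real content is the equality analysis, and in particular the emergence of the second, join-type family for odd $k$.

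First I would dispose of the disconnected case. If an extremal $G$ splits into components $G_1,\dots,G_s$ on $n_1,\dots,n_s$ vertices, each $G_i$ is $P_{k+1}$-free and the inductive hypothesis gives $e(G_i)\le \phi(n_i)$, where $\phi(n):=t\binom{k}{2}+\binom{r}{2}$ for $n=kt+r$, $0\le r<k$. The key structural fact is a superadditivity property of $\phi$: for any split $n=n_1+n_2$ one has $\phi(n_1)+\phi(n_2)\le\phi(n)$, since merging two small cliques $K_{r_1},K_{r_2}$ with $r_1+r_2\le k$ gains exactly $r_1r_2$ edges while staying $P_{k+1}$-free (this is the identity $\binom{r_1+r_2}{2}=\binom{r_1}{2}+\binom{r_2}{2}+r_1r_2$), so packing as many full $K_k$'s as possible is optimal. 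Summing, $e(G)\le\sum_i\phi(n_i)\le\phi(n)$; extremality forces every inequality tight, which makes each component extremal and pins down how the remainders coalesce, yielding the first family $(tK_k)\cup K_r$ (and the disconnected instances of the second family, where one component is a join).

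The heart is the connected case. Here I would fix a longest path $P=v_0v_1\cdots v_\ell$; since there is no $P_{k+1}$ we have $\ell\le k-1$, and both endpoints have all neighbors on $P$. Running the Pósa rotation argument from $v_0$ and $v_\ell$ constrains the set of reachable path-endpoints and their neighborhoods; combined with a degree count this should force a dichotomy. Either a full clique $K_k$ splits off, dropping to a smaller instance handled by induction, or $G$ admits a small vertex cut of size $(k-1)/2$ whose removal leaves a near-independent set, which is exactly the join structure $K_{(k-1)/2}+\overline{K}_m$ with $m=(k+1)/2+lk+r$. The appearance of the critical cut size $(k-1)/2$ is precisely why this second family exists only for odd $k$ and only at the boundary residues $r=(k\pm1)/2$; one checks directly that such a join has longest path $P_k$ and edge count matching $\phi(n)$.

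The main obstacle is this equality analysis in the connected case, not the inequality. Recovering the \emph{exact} extremal structure rather than merely the edge bound requires controlling the rotation/endpoint configuration tightly enough to exclude all intermediate graphs. I would expect to invoke the convexity of $\phi$ repeatedly: any putative extremal graph that is neither a disjoint union of cliques nor a single join graph $K_{(k-1)/2}+\overline{K}_m$ can be shown to have strictly fewer than $\phi(n)$ edges, which simultaneously closes the bound and completes the characterization. An alternative engine for the connected step would be to pass through the block (2-connected) decomposition and apply Kopylov's theorem on $2$-connected graphs of bounded circumference, which packages the rotation analysis into a clean extremal statement for cycles; I would reach for this if the direct Pósa argument proves too delicate to extract the precise equality cases.
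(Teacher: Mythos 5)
First, a point of reference: the paper does not prove this statement at all. It is quoted as background (Theorem~\ref{twturan}) with citations to Faudree--Schelp \cite{fas} and Erd\H{o}s--Gallai \cite{EG}, and is then only \emph{used} in Sections 4 and 5. So your attempt has to be judged on its own merits, and on those merits it is a plan rather than a proof. The parts you actually carry out are fine: the base case $n\le k$, and the reduction of the disconnected case via superadditivity of $\phi(n)=t\binom{k}{2}+\binom{r}{2}$ --- your clique-merging identity handles $r_1+r_2\le k$, and the case $r_1+r_2\ge k$ (which you pass over) follows from convexity of $x\mapsto\binom{x}{2}+\binom{s-x}{2}$. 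But this reduction only strips the theorem down to its real content: showing that a \emph{connected} $P_{k+1}$-free graph on $n>k$ vertices has strictly fewer than $\phi(n)$ edges unless $k$ is odd and $G$ is exactly the join $K_{(k-1)/2}+\overline{K}_{n-(k-1)/2}$ at the boundary residues $r=(k\pm1)/2$.

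That connected case is precisely where your write-up stops proving and starts hoping: ``running the P\'osa rotation argument \ldots{} should force a dichotomy,'' ``can be shown to have strictly fewer than $\phi(n)$ edges,'' ``I would expect to invoke the convexity of $\phi$ repeatedly,'' ``I would reach for Kopylov's theorem if the direct argument proves too delicate.'' No rotation lemma is stated or proved, no degree count is given, and the asserted dichotomy --- either a $K_k$ splits off, or there is a vertex cut of size exactly $(k-1)/2$ whose removal leaves a near-independent set --- is not a step toward the theorem, it essentially \emph{is} the theorem. That equality characterization is the entire contribution of Faudree and Schelp beyond the Erd\H{o}s--Gallai inequality, and it is not a routine verification; even your fallback, Kopylov's theorem on 2-connected graphs of bounded circumference, postdates \cite{fas} and is itself a substantial result whose own equality analysis you would have to import wholesale. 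As it stands, the proposal identifies a sensible skeleton (induction, component reduction, a connected extremal lemma) but leaves the single load-bearing step unproved, so it does not constitute a proof of the statement.
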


\medskip
The following notation and terminology comes from \cite{CaV}.
For positive integers $a$ and $b$, define $r(a,b)$ as 
\begin{displaymath}
r(a,b)= a - b\Big\lfloor \frac{a}{b} \Big\rfloor = a \bmod b.
\end{displaymath} For integers $n \geq k \geq 3$, define $w(n,k)$ by
\begin{displaymath}
w(n,k)= \frac{1}{2}(n-1)k - \frac{1}{2}r(k-r-1), \eqno{(3)}
\end{displaymath} where $r=r(n-1,k-1)$.

\bigskip
\noindent
Woodall's theorem \cite{Woo} can then be formulated as follows.

\begin{thm}[\cite{CaV}]
Let $G$ be a graph on $n$ vertices and $m$ edges with $m \geq n$
and circumference $c(G)$ equal to $k$. Then
\begin{displaymath}
m \leq w(n,k),
\end{displaymath} and this result is best possible.
\label{tw01}
\end{thm}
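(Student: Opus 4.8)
The plan is to prove the two directions separately: first exhibit a graph attaining the bound (so it is best possible), then establish $m\le w(n,k)$ by decomposing $G$ into blocks and reducing everything to a single estimate for $2$-connected graphs.

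For the best-possible direction I would first put $w(n,k)$ into a transparent form. Writing $n-1=q(k-1)+r$ with $0\le r<k-1$, so that $r=r(n-1,k-1)$, a short calculation turns (3) into $w(n,k)=q\binom{k}{2}+\binom{r+1}{2}$. This immediately suggests the extremal graph $G_0$: take $q$ copies of $K_k$ and one copy of $K_{r+1}$ and glue them into a chain so that consecutive cliques share exactly one vertex. Then $G_0$ has $q(k-1)+r+1=n$ vertices and $q\binom{k}{2}+\binom{r+1}{2}=w(n,k)$ edges, and since every shared vertex is a cut vertex, each block of $G_0$ is one of these cliques. As every cycle of a graph lies inside a single block, $c(G_0)=k$, so the bound cannot be improved.

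For the upper bound I may assume $G$ is connected, since disconnecting a graph with a prescribed number of edges and circumference only forces more vertices, so the connected case is the binding one. Let $B_1,\dots,B_s$ be the blocks of $G$, put $p_i=|V(B_i)|$, and recall the standard facts that $\sum_i(p_i-1)=n-1$, that $e(G)=\sum_i e(B_i)$, and that $c(G)=\max_i c(B_i)$ because every cycle lies in one block. The reduction I aim for is the per-block estimate $e(B_i)\le w(p_i,k)$; granting it, the theorem follows from $\sum_i w(p_i,k)\le w(n,k)$. Setting $r_i=r(p_i-1,k-1)$ and $f(x)=x((k-1)-x)$, this last inequality is equivalent to $\sum_i f(r_i)\ge f(r)$ under the constraints $\sum_i r_i\equiv r \pmod{k-1}$ and $r_i\in\{0,\dots,k-2\}$. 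Since $f$ is concave with $f(0)=0$, merging two remainders never increases $\sum_i f(r_i)$ (one checks the two cases according to whether the merged value exceeds $k-1$), so the sum is smallest when a single block carries all of the remainder, giving $f(r)$; this is exactly the clique chain $G_0$, where equality holds.

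It remains to prove the per-block bound $e(B)\le w(p,k)$ for a block $B$ on $p$ vertices with $c(B)\le k$, and I expect this to be the main obstacle. For $p\le k$ it is immediate, since then $w(p,k)=\binom{p}{2}$ is the largest possible number of edges. For $p>k$ the block is $2$-connected, and here I would fix a longest cycle $C$ of length $k$ and run the Erd\H{o}s--Gallai--Kopylov rotation argument: using $2$-connectivity to attach every vertex of $V(B)\setminus V(C)$ to $C$ and forbidding any chord pattern that would close a cycle longer than $C$, one bounds the edges incident to $V(B)\setminus V(C)$ and those among them. Tracking the remainder $r$ through this count is precisely what sharpens the crude Erd\H{o}s--Gallai value $\frac12 k(p-1)$ to $w(p,k)$ by the deficiency $-\frac12 r((k-1)-r)$. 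The delicate bookkeeping in this $2$-connected estimate, rather than the global assembly, is where the real work lies.
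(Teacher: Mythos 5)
A preliminary point: the paper does not prove this statement at all --- it is quoted as a known background result (Woodall's theorem, in the formulation of Caccetta and Vijayan \cite{CaV}), so your proposal can only be assessed as a standalone proof, not compared against an argument in the paper. The parts of your plan that you actually carry out are correct: the rewriting $w(n,k)=q\binom{k}{2}+\binom{r+1}{2}$ for $n-1=q(k-1)+r$ is right; the chain of $q$ copies of $K_k$ and one $K_{r+1}$ glued at cut vertices does have $n$ vertices, $w(n,k)$ edges and circumference exactly $k$, which settles sharpness; and the block-assembly step is sound --- with $f(x)=x(k-1-x)$, the inequality $\sum_i f(r_i)\ge f(r)$ follows in the non-wrapping case from subadditivity (concavity plus $f(0)=0$), and in the wrapping case from the identity $f(r_1)+f(r_2)-f\bigl(r_1+r_2-(k-1)\bigr)=2(k-1-r_1)(k-1-r_2)\ge 0$.

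The genuine gap is the per-block estimate, which you yourself label ``the main obstacle'' and then do not prove: for a $2$-connected graph $B$ on $p>k$ vertices with $c(B)\le k$, show $e(B)\le w(p,k)$. This is not a routine verification to be delegated to a named technique; it is essentially the entire content of Woodall's theorem, and the sentence ``run the Erd\H{o}s--Gallai--Kopylov rotation argument'' is a pointer, not an argument. In particular, bounding the edges \emph{among} the vertices outside a longest cycle, and their interaction with chords of the cycle, is exactly where all known proofs (Woodall's induction on minimum degree, or Kopylov's analysis of $2$-connected graphs) spend their effort. Your sketch is also somewhat off target about where the deficiency term comes from: you say tracking the remainder inside the $2$-connected analysis produces $-\tfrac12 r(k-1-r)$, but your own assembly computation shows that this term is generated by the leftover clique $K_{r+1}$ in the block structure; for $2$-connected graphs on $p>k$ vertices the true maximum is of a different shape (Kopylov-type bounds $\binom{k-a}{2}+a(p-k+a)$) and lies strictly below $w(p,k)$, since the extremal graphs for $w$ all have cut vertices. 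So what you have is a correct and clean reduction of the theorem to its hard kernel, together with a correct sharpness example --- but not a proof, because the kernel is left unproved.
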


\noindent
In \cite{CaV} (see also Appendix B), one can find the description of all
extremal graphs achieving $w(n,k)$.

\bigskip
\section {Progress on $R_3(P_{2n+1})$}

\bigskip
First we prove the following general implication.

\begin{thm}
For all $n \ge 3$, if $R_3(C_{2n})=4n$, then $R_3(P_{2n+1})=4n+1$.
\end{thm}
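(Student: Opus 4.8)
The plan is to prove the two matching bounds $R_3(P_{2n+1}) > 4n$ and $R_3(P_{2n+1}) \le 4n+1$ separately, using the hypothesis $R_3(C_{2n})=4n$ only for the upper bound. The lower bound is already in hand: the blow-up of the factorization of $K_4$ described in Section~2, with each of the four vertices replaced by a set of $n$ vertices, is a $(P_{2n+1},P_{2n+1},P_{2n+1};4n)$-coloring, so $R_3(P_{2n+1}) \ge 4n+1$ and it remains to prove the upper bound.

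For the upper bound I would argue by contradiction, assuming a coloring $F$ of $K_{4n+1}$ with three colors and no monochromatic $P_{2n+1}$. Deleting any one vertex leaves a $3$-colored $K_{4n}$, which by the hypothesis $R_3(C_{2n})=4n$ contains a monochromatic $C_{2n}$; I may take it to be red, write $C$ for its vertex set and $W=V(K_{4n+1})\setminus C$, so that $|C|=2n$, $|W|=2n+1$, and $C,W$ partition all $4n+1$ vertices. The first key step is to observe that no edge between $W$ and $C$ is red: if some $w\in W$ were joined in red to a cycle vertex $u_i$, then the edge $wu_i$ followed by the full traversal $u_i,u_{i+1},\dots,u_{i-1}$ of the red cycle would be a red path on $1+2n=2n+1$ vertices, contradicting the choice of $F$. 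Hence all $2n(2n+1)$ edges of the complete bipartite graph between $W$ and $C$ are colored blue or green.

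The proof then reduces to a counting argument on this bipartite graph. Since $F$ contains no blue $P_{2n+1}$ at all, in particular the blue part of the $W$--$C$ bipartite graph is $P_{2n+1}$-free, and likewise in green. Using the Gy\'arf\'as--Rousseau--Schelp value $f(m,n,k)$ for the maximum number of edges in a $P_k$-free subgraph of $K_{m,n}$ \cite{GRS}, each of the two bipartite color classes has at most $f(2n+1,2n,2n+1)$ edges, so that
$$2n(2n+1)\ \le\ 2\,f(2n+1,2n,2n+1).$$
To finish I would read off $f(2n+1,2n,2n+1)$ from the odd-$k$ table in \cite{GRS} and verify the strict inequality
$$2\,f(2n+1,2n,2n+1)\ <\ 2n(2n+1)\qquad(n\ge 3),$$
which contradicts the displayed lower bound and completes the argument.

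The main obstacle is precisely this last step: extracting the exact value of $f(2n+1,2n,2n+1)$ from the tables and checking the strict inequality uniformly in $n$. The natural extremal $P_{2n+1}$-free bipartite constructions (for instance $n-1$ vertices of one side joined to the entire other side, optionally augmented by a single dominating vertex) indicate that $f(2n+1,2n,2n+1)$ is of order $2n^2$, comfortably below the half-total $n(2n+1)=2n^2+n$, so I expect the inequality to hold strictly with room to spare; nonetheless the bookkeeping must be carried out carefully. Should the tabulated value happen to meet the bound with equality for some small $n$, one would instead invoke the characterization of the extremal graphs (the construction tables in \cite{GRS}) to rule out a simultaneous blue/green extremal partition of $K_{2n+1,2n}$. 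The restriction $n\ge 3$ is what makes both $R_3(C_{2n})=4n$ meaningful (the cycle has length at least $6$) and the formula for $f$ fall in its generic range.
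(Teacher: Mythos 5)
Your outline reproduces the paper's proof step for step: the same blow-up lower bound with four blocks of $n$ vertices, the same use of the hypothesis $R_3(C_{2n})=4n$ to extract a monochromatic (red) $C_{2n}$ in $K_{4n+1}$, the same observation that no red edge may join the cycle to the other $2n+1$ vertices (else traversing the cycle from that edge yields a red $P_{2n+1}$), and the same plan of bounding each of the blue and green parts of the resulting complete bipartite graph $K_{2n,2n+1}$ by the Gy\'arf\'as--Rousseau--Schelp number. However, you stopped exactly at the step on which the whole proof turns: you never evaluate $f(2n+1,2n,2n+1)$, only predict that the strict inequality will hold. The paper completes this by the last row of Table IV in \cite{GRS}: with $a=|X|=2n$, $b=|Y|=2n+1$ and $c=n-1$ (so that avoiding $P_{2n+1}=P_{2c+3}$ is the odd case, and $a=2(c+1)$ selects that row), one gets
$$f_1(a,b,c)=(a+b-2c)c=(2n+3)(n-1)=2n^2+n-3,$$
hence $2f_1(a,b,c)=4n^2+2n-6<4n^2+2n=|X|\,|Y|$, the desired contradiction.

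The reason this deferred step is a genuine gap rather than routine bookkeeping is that your quantitative intuition about it is wrong. You expect the extremal value to sit ``comfortably below'' the half-total $2n^2+n$ ``with room to spare''; in fact it equals $2n^2+n-3$, a margin of exactly $3$ edges per colour class, independent of $n$. Your trial construction ($n-1$ vertices of the cycle side joined to the whole other side) has only $(n-1)(2n+1)=2n^2-n-1$ edges and underestimates the true extremum by about $2n$; the genuine extremal graphs come within a constant of half the edges. So the argument survives, but only just: had the table value been larger by $3$, the strict inequality would fail and the fallback you wisely kept in reserve---ruling out a simultaneous blue/green extremal partition via the characterization of extremal graphs in \cite{GRS}---would have been indispensable (this is exactly what the paper is forced to do in the $K_{7,8}$ and $K_{7,7}$ cases inside the proof of its Theorem 8, where the bound is met with equality). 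As it stands, your skeleton is correct and identical to the paper's, and it is completed verbatim by the table computation above.
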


\medskip
\begin{proof}
The lower bound follows from the ``blow-up" construction
commented on after the statement of Conjecture 2 in
section 2 (see also \cite{GRSS}).

\medskip
For the upper bound, suppose that there exists a 3-edge
coloring of $K_{4n+1}$ without monochromatic $P_{2n+1}$.
From the assumption that $R_3(C_{2n})=4n$, we know that this
coloring contains a monochromatic $C_{2n}$
(actually, a weaker assumption $R_3(C_{2n}) \le 4n+1$
would suffice, but in Theorem 6 we preferred to stress
directly a link between Conjectures 2 and 3).
Without loss of generality, we assume that this
$C_{2n}$ is red. Now, in order to avoid
red $P_{2n+1}$ no vertex on this cycle can be connected
by a red edge to any vertex outside of the cycle. Hence,
we have a complete bipartite graph $K_{2n,2n+1}$ with only
blue and green edges. Let the parts of this bipartite graph
be called $X$ (vertices on the cycle) and $Y$ (vertices
outside of the cycle).  
Using notation of \cite{GRS}, we have
$$a=2n=|X|,\ \ \ b=2n+1=|Y|,\ \ \ c=n-1,\ \ \ a=2(c+1),$$

\medskip
\noindent
hence, if we apply the last row of Table IV
(see \cite{GRS} or Appendix A),
then we obtain that the maximum number of edges $g(2n,2n+1,2n+1)$
in any $P_{2n+1}$-free subgraph of $K_{2n,2n+1}$ is

$$g(2n,2n+1,2n+1) = (a+b-2c)c = 2n^2 + n - 3.$$

\medskip
\noindent
This implies that

$$2g(2n,2n+1,2n+1) = 4n^2 + 2n - 6 < 2n(2n+1) = |X||Y|,$$

\medskip
\noindent
and therefore blue and green edges
cannot account for all the edges of $K_{X,Y}$
without creating a monochromatic $P_{2n+1}$.
This completes the proof of the upper bound
$R_3(P_{2n+1}) \le 4n+1$.
\end{proof}

\begin{cor}{$($implied by computations for cycles$)$} 
$$R_3(P_7)=13\ \ and\ \ R_3(P_9)=17.$$
\end{cor}

\begin{proof}
The equality $R_3(P_7)=13$ was obtained previously by direct
computations \cite{yy}, while the result $R_3(P_9)=17$ is new.
It is known that $R_3(C_6)=12$ \cite{yr} and $R_3(C_8)=16$ \cite{suny},
where the upper bounds in both equalities were obtained with the help
of computer algorithms.
By Theorem 6, these imply that $R_3(P_7)=13$ and $R_3(P_9)=17$.
\end{proof}

\medskip
In the proof of the next theorem we provide a computer-free
proof of the upper bound $R_3(P_9) \le 17$. The proof of
$R_3(P_7) \le 13$ can be obtained by a similar reasoning.

\begin{thm}{$($computer-free$)$}
$$R_3(P_9) \le 17.$$
\label{tw08}
\end{thm}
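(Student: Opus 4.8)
The plan is to reduce to the same bipartite estimate used in the proof of Theorem~6, after first producing a monochromatic $C_8$ by hand. Suppose, for contradiction, that a $(P_9,P_9,P_9;17)$-coloring of $K_{17}$ exists. The backbone of the argument is the following reduction: if some color, say red, contains a $C_8$ on a vertex set $X$ with $|X|=8$, then no red edge may join $X$ to the remaining set $Y$ of $9$ vertices, since a red $C_8$ together with one pendant red edge already spans a red $P_9$. Hence all $|X||Y|=72$ edges of $K_{X,Y}$ are blue or green. Applying the last row of Table~IV in \cite{GRS} exactly as in the proof of Theorem~6 with $n=4$ (so $a=8$, $b=9$, $c=3$), each color spans at most $f_1(a,b,c)=(a+b-2c)c=33$ edges of $K_{X,Y}$ without a monochromatic $P_9$, and since $2\cdot 33=66<72$ this is impossible. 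Thus it suffices to exhibit a monochromatic $C_8$.

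To get started, I would guarantee a long monochromatic path by counting. By the Erd\H os--Gallai/Faudree--Schelp bound (Theorem~\ref{twturan}), a $P_8$-free graph on $17=7\cdot 2+3$ vertices has at most $T(17,P_8)=2\binom{7}{2}+\binom{3}{2}=45$ edges; since $3\cdot 45=135<136=\binom{17}{2}$, the three color classes cannot all be $P_8$-free, so some color---say red---contains a $P_8$. As there is no red $P_9$, a longest red path $P=v_1v_2\cdots v_8$ has exactly $8$ vertices, and by maximality both endpoints $v_1,v_8$ send all of their red edges into $P$; in particular neither has a red edge to $Y$.

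The decisive step is to promote this $P_8$ to a red $C_8$. Here I would use the rotation--extension technique on $P$: treating the eight path-vertices as a subgraph in which $v_1,v_8$ are the ends of a Hamiltonian path, an Ore-type closure shows that if the red degrees of $v_1$ and $v_8$ (which, by maximality, lie entirely inside $P$) sum to at least $8$, then $P$ closes into a red Hamiltonian cycle $C_8$, and we finish by the reduction above. The main obstacle is the complementary case, in which the longest red path never closes into a red $C_8$; then the endpoints of $P$ and of its rotations have small red degree, hence large blue-plus-green degree. I would exploit maximality to show that every $y\in Y$ attaches to $P$ only at interior vertices, and that these attachments are too restricted to build a longer red path, so that the red bipartite graph between $X$ and $Y$ is again $P_9$-free and can be bounded through the tables of \cite{GRS}. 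Feeding this, together with the Turán bounds on the blue and green graphs, into a global count of the $136$ edges should force a monochromatic $C_8$ after all, or a monochromatic $P_9$ directly. Balancing this non-closing case across the three colors---where the tables of \cite{GRS} must be invoked several times---is the delicate part of the argument.
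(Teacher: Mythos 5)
Your opening reduction is sound and coincides with the paper's own handling of the case $k=8$: a monochromatic $C_8$ plus any pendant edge of the same color gives a $P_9$, so all $72$ edges of $K_{8,9}$ must be blue or green, while the last row of Table IV of \cite{GRS} allows at most $2f_1(8,9,3)=66$ such edges, a contradiction. Your extraction of a monochromatic $P_8$ via $T(17,P_8)=45$ and $3\cdot 45=135<136$ is also correct. The genuine gap is the step you yourself flag as ``the delicate part'': the case in which no longest red path closes into a red $C_8$, i.e.\ the Ore/rotation condition $d(v_1)+d(v_8)\ge 8$ fails. Nothing forces closure --- $P_9$-free graphs containing a $P_8$ but with circumference at most $7$ certainly exist (e.g.\ $K_7$ with a pendant edge), so this complementary case is not degenerate; it is where the entire difficulty of the theorem lives, and your sketch (``exploit maximality\dots feed into a global count\dots should force a monochromatic $C_8$ after all'') contains no actual argument for it. Worse, the stated goal of the sketch is probably unattainable: in the analogous hard case the paper does \emph{not} produce a monochromatic $C_8$; the contradiction there is a pure edge count, not the appearance of a cycle.

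Concretely, the paper proceeds differently at exactly this point. It first passes to the densest color, $e(G^1)\ge 46$, and invokes Woodall's theorem (Theorem~5, via $w(17,6)=46$ together with inspection of the extremal graphs of \cite{CaV}) to obtain a red cycle of length at least $7$; after the cases $k\ge 8$ are dispatched as above, everything reduces to red circumference exactly $7$. There the paper proves a separate Claim --- at least $4$ vertices off the red $C_7$ send red edges to it --- by repeated application of Tables II and IV of \cite{GRS} to the bipartite graphs $K_{7,k}$ for $k=10,9,8,7$, including explicit elimination of the extremal graphs $G_{14}$ and $G_{15}$. It then converts the structural restrictions (each such vertex has at most $3$ red edges to the cycle, ending at pairwise nonadjacent cycle vertices; the set $M\cup\{c_2,c_4,c_6,c_7\}$ spans no red edge, so $R(P_9,P_9)=12$ bounds $m\le 7$) into the counting inequalities (4) and (5), which contradict $e(G^1)\ge 46$. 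Your proposal would need an equivalent of this entire analysis to settle the non-closing case; as written, it establishes only the easy half of the theorem.
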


\begin{proof}
We need to show that each 3-coloring of the edges of $K_{17}$
contains a monochromatic $P_9$. Let us suppose that
there is a $(P_9, P_9, P_9; 17)$-coloring $G$ with colors red, blue
and green, forming graphs $G^1$, $G^2$ and $G^3$, respectively.
Since $K_{17}$ has 136 edges, we may assume without
loss of generality that there are at least 46 red edges,
i.e. $e(G^1)\ge 46$.

Since by (3) we have $w(17,6)=46$, it follows by Theorem 5 that
$G^{1}$ contains a cycle $C_k$ for some $k \geq 6$. One can
easily verify that the critical graphs in this case
(described in \cite{CaV} and in Appendix B)
have $P_9$, and thus $k \geq 7$.
If $k \geq 9$, we immediately obtain a red $P_9$, a contradiction.
If $k=8$, then we use an argument very similar to the main
steps in the proof of Theorem 6. In order to avoid a $P_9$ in $G^{1}$
we have a bipartite graph
with partite sets of order $8$ and $9$, respectively, and
let us denote it by $G'$.
In order to avoid monochromatic $P_9$ in $G^{2}$ and $G^{3}$,
by using the last row of Table IV,
we see that $G'$ can contain at most $66$ blue and green edges.
Each of at least 6 other edges of $G'$ are red and
together with the $C_8$ they contain a red $P_9$,
again a contradiction.
Hence, in the rest of the proof we will assume that
$G$ has a red $C_7$ with vertices  $C=\{c_1, c_2, ..., c_7\}$,
the remaining vertices are $P=\{p_1, p_2, ..., p_{10}\}$,
and $G$ has no red $C_k$ for any $k \ge 8$.

\bigskip
\noindent
{\bf Claim 8A.} {\em
Let $H$ be a $(P_9,P_9,P_9;17)$-coloring, and suppose that
$|H^{1}| \ge 46$, $c(H^{1})=7$,
and $H^{1}$ contains a cycle $C=C_7$. Then there are at least $4$ vertices in
$V(H) \setminus V(C)$ joined by at least one red edge to $C$.}

\medskip
\noindent
{\bf Proof of Claim 8A.}
Consider the coloring $H$ as stated above.
Let the vertices of $C_7$ in $H^{1}$ be $C=\{c_1, c_2, ..., c_7\}$,
and the remaining vertices of $H$ are $P=\{p_1, p_2, ..., p_{10}\}$.
We will use the tables in Appendix A
several times when considering bipartite subgraphs
of $K_{|C|,k} = K_{7,k}$ for $k=10,9,8$ and $7$. We prove that
there are red edges in these bipartite subgraphs, and so for
each $k$ we obtain one more vertex in $ V(H) \setminus C$
joined to the cycle $C_7$ by at least one red edge.

The maximum possible number of edges in a bipartite graph with
partite sets 7 and $k=10$ or $9$ without $P_9$ is $7 + 3(k-1)$,
which follows from Table IV with
$a=7$, $b=k$, $c=3$ and $f_1(a,b,c)=a + (b-1)c$.
Since $2(7 + 3(k-1)) < 7k=e(K_{7,k})$ for $k=10$
and $9$, we obtain the first 2 vertices, say $p_1$ and $p_2$,
connected to $C_7$ by at least one red edge.

Now, we consider the bipartite graph
$K_{|C|,|P\setminus \{p_1,p_2\}|}=K_{7,8}$.
Similarly to the previous case, the maximum number
of edges in this bipartite graph without $P_9$ is
$7 + 3(8-1)=28$. This time, however, this is exactly
half of the edges of $K_{7,8}$, so we need to consider
the possible extremal graphs. By Table II
these extremal graphs are $G_{14}$ and $G_{15}$ with
$a=7$, $b=8$ and $c=3$, and they can be eliminated
as follows:

\begin{itemize}
\item $G_{14}=K_{7,8} - K_{4,7}$.
Clearly, $K_{4,7}$ contains a $P_9$, so it cannot consist
of edges of a single color, a contradiction.
\item $G_{15}=K_{4,4} \cup K_{3,4}$, which under bipartite
complement is isomorphic to itself.
Let us consider vertex $p_1$. To avoid red $C_8$ the vertex
$p_1$ is joined by at most $3$ red edges to the cycle $C_7$.
By considering the remaining edges from the vertex $p_1$
we see that at least one, say blue, is connected to the
$K_{4,4}$ part of $G_{15}$ in blue. This easily gives
a monochromatic $P_9$, a contradiction.
\end{itemize} 

\noindent
Thus, we have the third vertex, say $p_3$,
connected to $C$ by a red edge.

\medskip
Now we consider the bipartite graph
$K_{C,P \setminus \{p_1,p_2,p_3\}}$.
By the third case in Table IV,
the maximum possible number of edges
in this bipartite graph without $P_9$ is $7 + 3(7-1)=25$.
By Table II, there are two possible extremal graphs:
$G_{14}$ and $G_{15}$ which now are
$K_{7,7} - K_{4,6}$ and
$K_{4,4} \cup K_{3,3}$, respectively.
We proceed similarly as for $k=8$, now
by considering possible edges from $p_1, p_2$ and $p_3$
to the cycle $C_7$.
As for $k=8$, we have to consider two cases.

\begin{itemize}
\item $G_{14}$.
Suppose that $K_{7,7}-K_{4.6}$ is blue, and its bipartite complement
$K_{4,6}$ is green. In order to avoid red $C_8$, each of $p_1$ and $p_2$
may have at most 3 red edges to $C_7$, and thus at most 3 to the left
side of $K_{4,6}$.
If any of the vertices $p_1$ and $p_2$ is connected by a green edge
to $K_{4,6}$, then we have a green $P_9$. Otherwise, both
$p_1$ and $p_2$ have a blue edge to $K_{4,6}$, which easily
leads to a blue $P_9$.

\item $G_{15}$.
Suppose that $K_{4,4} \cup K_{3,3}$ is blue, and its bipartite complement
$K_{4,3} \cup K_{3,4}$ is green. In order to avoid red $C_8$, each of
$p_1$ and $p_2$ may have at most 3 red edges to $C_7$, and thus
at most 3 to the left side $L$ of $K_{4,4}$. If any of the edges
from $p_1$ or $p_2$ to $L$ is blue, then we have a blue $P_9$.
Otherwise, each of $p_1$ and $p_2$ has at least one green edge to $L$;
say $\{p_1,x\}, \{p_2,y\}$ are green for some $x,y \in L$.
If $x \not= y$ then we have a green $P_9$. Thus we can assume
that for some $x \in L$ and for $i=1,2,3$ we have: $\{p_i,x\}$
is green, the edges $\{p_i,z\}$ are red for $x \not= z \in L$,
and all edges from $p_i$ to the left side of $K_{3,3}$ are blue.
Now, in order to avoid red $C_8$ and green $P_9$, all three edges
between $p_i$'s must be blue. This leads to a blue $P_9$.
\end{itemize}

Hence, we obtain the fourth required vertex $p_4$.
This completes the proof of Claim 8A.

\bigskip
We have $m \ge 4$ vertices $M=\{p_1,\dots,p_m\} \subset P$
not on the red $C_7$ joined to it by some red edges.
Assuming that there is no red $P_9$, one can easily see
that there can be no red edges $\{p_i,p_j\}$ 
with $p_i \in M$ and $p_j \in P$. Hence, $P$
induces at most ${10-m \choose 2}$ red edges.
To avoid red $C_8$, the vertices in $M$ can be joined
by at most $3$ red edges each to $C$ (to vertices
nonadjacent on the cycle $C_7$).

First, consider the case when a vertex in $M$,
say $p_1$, has 3 red edges to $C$,
without loss of generality
$\{p_1, c_1\}$, $\{p_1, c_3\}$ and $\{p_1, c_5\}$.
Note that no vertex $p \in M$, $p \not= p_1$, can
be joined by any red edge to the vertices in the
set $\{c_2, c_4, c_6, c_7\}$, since otherwise a
red $P_9$ from $p$ to $p_1$ can be easily constructed.
In addition, if the edge $\{p_2,c_5\}$ is red, then
the edges $\{c_4, c_6\}$, $\{c_4, c_7\}$, $\{c_2, c_4\}$
are blue or green. For example, if  $\{c_4, c_6\}$ is red,
then $p_2c_5c_4c_6c_7c_1c_2c_3p_1$ is a red $P_9$. Similarly,
if $\{p_2,c_1\}$ or $\{p_2,c_3\}$ is red, then at least
three edges induced in $C$ must be blue or green.
In all cases, $C$ induces at most 18 red edges. Thus,
counting red edges in $C$, between $C$ and $P$, and in $P$,
we have
$$e(G^1) \le 18+3m+{10-m \choose 2}.\eqno{(4)}$$
Observe that the set $M\cup \{c_2, c_4, c_6, c_7\}$
induces only blue and green edges.
The latter and the known value $R(P_9,P_9)=12$
\cite{GeGy} imply that $m+4 \le 11$.
By Claim 8A we have $m \ge 4$, so $4 \le m \le 7$,
and we find that $e(G^1)<46$ for all possible $m$.
This is a contradiction.

Finally we consider the case when all vertices in $M$
are connected to $C$ by at most 2 red edges. Counting
again red edges, for all possible $4 \le m \le 10$,
we obtain
$$e(G^1) \le {7 \choose 2}+2m+{10-m \choose 2}<46,\eqno{(5)}$$
which is a contradiction.
\end{proof}

\section{Ramsey Number $R_3(P_8)$}

\bigskip
We begin with a lemma which is technically very similar
to Claim 8A within the proof of Theorem 8.

\begin{lem}
Let $H$ be a $(P_8,P_8,P_8;14)$-coloring, and suppose that
$|H^{1}| \ge 31$, $c(H^{1})=6$, and $H^{1}$ contains
a cycle $C=C_6$. Then there are at least $3$ vertices in
$V(H) \setminus V(C)$ joined by at least one red edge
to the cycle $C$.
\label{lem02}
\end{lem}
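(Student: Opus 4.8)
The plan is to follow the scheme of the Claim in the proof of Theorem~8, with the cycle $C_7$ replaced by $C_6$ and the path $P_9$ replaced by $P_8$. Write $C=\{c_1,\dots,c_6\}$ for the vertices of the red $C_6$ and $P=\{p_1,\dots,p_8\}$ for the remaining eight vertices, and consider the bipartite subgraphs $K_{|C|,k}=K_{6,k}$ between $C$ and successively smaller subsets of $P$ for $k=8,7,6$; in each case I would exhibit one red edge joining a new vertex of $P$ to $C$, which yields the three required vertices. The relevant input from \cite{GRS} (Table~III) is that a $P_8$-free subgraph of $K_{6,k}$ has at most $3k$ edges.

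The decisive difference from the $P_9$ case is that here $|C|=6=2\cdot 3$, so $2\cdot 3k=6k=e(K_{6,k})$ for every $k$: the edge count is tight at every stage and, unlike $K_{7,9}$ or $K_{7,10}$ in the Claim, no step is settled by a strict inequality. Moreover the bipartite complement of each extremal $P_8$-free graph in $K_{6,k}$ is again $P_8$-free, so the complement argument that disposed of $G_{14}$ in the Claim is unavailable here. Consequently I would, at each $k$, assume toward a contradiction that $K_{6,k}$ carries only blue and green edges; then each color class is an extremal $P_8$-free graph with exactly $3k$ edges, and I would enumerate these extremal graphs from Table~I of \cite{GRS} and eliminate every one of them using the red $C_6$.

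The main device is that an extremal coloring forces many edges inside $P$ to be red. If a color class is a monochromatic $K_{3,k}$, then the complementary class is the other $K_{3,k}$, and each contains a $P_7$ with both ends in $P$; any edge inside $P$ of either color incident to such an end would extend it to a monochromatic $P_8$, so every edge inside the big side must be red. For a disjoint-union shape $K_{3,a}\cup K_{3,b}$ the same reasoning forces the complete bipartite graph between the two blocks, and the interior of any block of size at least $5$, to be red. Whenever the resulting forced red subgraph contains a clique or complete bipartite graph of circumference at least $7$---a red $K_8$, $K_7$, $K_7-e$ or $K_{4,4}$---we contradict $c(H^1)=6$ via Theorem~5. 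When a monochromatic $K_{3,k}$ has $k\ge4$ but the forced red graph has circumference only $6$, I would bring in one or both of the already-produced vertices $p_1,p_2$: a blue or green edge from $p_j$ to the big side of such a $K_{3,k}$ extends its $P_7$ to a monochromatic $P_8$, so every such edge is red; together with the red edges already forced inside $P$ this produces a red cycle of length at least $7$ or a red $P_8$, a contradiction.

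The step I expect to be the main obstacle is the genuinely balanced case at $k=6$, where both color classes are $K_{3,3}\cup K_{3,3}$, together with the shape in which all of $C$ is joined to a triple of $P$. Here every monochromatic component is a $K_{3,3}$, whose longest path is only $P_6$, so adjoining an extra vertex yields merely a $P_7$; the forced red graph between the two blocks is a $K_{3,3}$ of circumference exactly $6$; and hence neither the circumference contradiction nor the extension by $p_1,p_2$ used above applies. To close this case I would combine the hypothesis $|H^1|\ge31$, which forces many red edges inside $P$; the facts that the red neighbors on $C_6$ of any outside vertex form an independent set of size at most $3$ and that red attachments of two distinct outside vertices to the two bipartition classes of $C_6$ create a red $P_8$ along a Hamiltonian path of the cycle; and the value $R(P_8,P_8)=11$ from \cite{GeGy}, applied to a set of vertices spanning only blue and green edges. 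Making these fit together is the delicate point of the proof.
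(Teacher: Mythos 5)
Your overall scheme is the same as the paper's (whose own proof is only a sketch deferring to the Claim in Theorem 8): examine $K_{6,k}$ for $k=8,7,6$, observe that $2\cdot 3k=e(K_{6,k})$ makes the count tight, so in the absence of red edges both color classes must be extremal $P_8$-free graphs, and then eliminate the extremal pairs; your treatment of the unbalanced shapes is correct. The gap is precisely the case you flag but do not resolve: at $k=6$, blue $=K_{T_1,A}\cup K_{T_2,B}$ and green $=K_{T_1,B}\cup K_{T_2,A}$, where $C=T_1\cup T_2$ and $P\setminus\{p_1,p_2\}=A\cup B$ with all four parts of size $3$ (and the companion split into two copies of $K_{6,3}$). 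Conceding that "making these fit together is the delicate point" leaves open exactly the part where the lemma's difficulty lies, and the ingredients you list cannot be made to work. First, $R(P_8,P_8)=11$ is inapplicable: the nine forced red $A$--$B$ edges, the red $C_6$, and the red edges at $p_1,p_2$ leave no blue/green-only vertex set of size larger than $8$, let alone $11$. Second, $|H^1|\ge 31$ yields no contradiction: the red graph consisting of a $K_6$ on $C$, the pendant edges $\{p_1,c_1\}$, $\{p_2,c_1\}$, and a $K_6$ on $A\cup B$ has $32$ edges, circumference $6$, and no $P_8$, so the red side alone is consistent with everything you have forced. Third, your claim that red attachments of two outside vertices to the two bipartition classes of $C_6$ give a red $P_8$ fails for antipodal attachments: $c_1$ and $c_4$ lie in different classes, yet no Hamiltonian path of the cycle joins them, so red edges $\{p_1,c_1\}$ and $\{p_2,c_4\}$ force nothing.

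The balanced case does close, but by a blue/green structural argument of the kind you already use elsewhere, not by counting. Since each $p_i$ has a red edge to $C$ and all nine $A$--$B$ edges are red, a red edge $\{p_i,a_1\}$ would create the red $P_8$ given by $cp_ia_1b_1a_2b_2a_3b_3$; hence all twelve edges from $\{p_1,p_2\}$ to $A\cup B$ are blue or green. If some $p_i$ had blue edges to both $A$ and $B$, then a six-vertex blue path in $K_{T_1,A}$ ending at a blue neighbor $a$ of $p_i$, continued through $p_i$, a blue neighbor $b\in B$, and a vertex of $T_2$, is a blue $P_9$; the same holds for green, so each $p_i$ sends all its blue edges into one triple of $P\setminus\{p_1,p_2\}$ and all its green edges into the other. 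Now the edge $\{p_1,p_2\}$, in whichever color it is, extends a seven-vertex path of that color ending at $p_1$ (inside the $K_{4,3}$ obtained by adding $p_1$ to its blue, respectively green, component) to a monochromatic $P_8$, a contradiction. In the $K_{6,3}$ split, a blue (green) edge from $c\in C$ to $p_i$ would extend a blue (green) $P_7$ ending at $c$, so all twelve $C$--$\{p_1,p_2\}$ edges are red, and $p_1$ joined in red to two consecutive cycle vertices yields a red $C_7$, contradicting $c(H^{1})=6$. Some completion along these lines is what your write-up is missing.
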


\begin{proof}
We prove this lemma similarly as Claim 8A in Theorem 8.
Consider any coloring $H$ as stated above.
Let the vertices of $C_6$ in $H^{1}$ be $C=\{c_1, c_2, ..., c_6\}$,
and the remaining vertices are $P=\{p_1, p_2, ..., p_8\}$.

We will use the tables of Appendix A several times when considering
bipartite subgraphs of $K_{|C|,k} = K_{6,k}$ for $k=8,7$ and $6$.
We prove that there are red edges in these bipartite subgraphs,
and so for each $k$ we obtain one more vertex in
$V(H) \setminus C$ joined to $C$ by at least one red edge.
By using three times Tables I and III and
considering $K_{6,k}$ for $k=8,7,6$, we can see that the maximum
number of edges in these bipartite subgraphs without $P_8$ is $3k$.
From Table I, the extremal graphs are $K_{3,l} \cup K_{3,k-l}$,
where $0 \leq l \leq k$.

First, consider the case when $k=8$. The maximum number
of edges in $K_{6,8}$ without $P_8$ is 24. Without loss of
generality consider the situation when
$K_{3,l} \cup K_{3,8-l}$, $l \geq 4$, is blue and the
bipartite complement of this graph is green. Assume
that $K_{3,l}=K_{3, |S|}$ where $S=\{p_1, p_2, ..., p_{l}\}$.
These graphs can be eliminated as follows:

\begin{itemize}
\item $l \geq 5$. To avoid a blue or green $P_8$, 
$S$ has only red edges and the vertices in $S$ can be joined
only by red edges to vertices in $P \setminus S$.
This easily gives a red $P_8$, a contradiction.    

\item $l = 4$. To avoid a blue or green $P_8$, all
the edges from $S$ to $P \setminus S$ are red.
Then we have a red $P_8$, a contradiction.
\end{itemize} 

\noindent
Thus, we have the first vertex, say $p_1$,
connected to $C$ by a red edge.

\medskip
Now, we consider the bipartite graph
$K_{|C|, |V(P)-\{p_1\}|}=K_{6,7}$.
Similarly as in the previous case, the maximum number of edges
without $P_8$ is 21. Without loss of generality,
consider the situation when
$K_{3,l} \cup K_{3,7-l}$, $l \geq 4$, is
blue and the bipartite complement of this graph is green.
Assume that $K_{3,l}=K_{3, |S|}$. To avoid a red $P_8$,
$p_1$ has only blue or green edges to the set $S$.
Then we have a blue or green $P_8$, a contradiction.
Thus, we have the second vertex, say $p_2$,
connected to $C$ by a red edge.

\medskip
Now, we consider the bipartite graph
$K_{|C|, |V(P)-\{p_1, p_2\}|}=K_{6,6}$.
Similarly as in the previous case, the maximum number of
edges in this bipartite graph without $P_8$ is 18. Without
loss of generality let us consider the situation when
$K_{3,l} \cup K_{3,6-l}$, $l \geq 3$, is blue and the
bipartite complement of this graph is green. Assume that
$K_{3,l}=K_{3, |S|}$. To avoid a red $P_8$, the edge
$\{p_1,p_2\}$ is blue or green and $p_1, p_2$ have only blue
or green edges to the set $S$. Then we have a blue or green
$P_8$, a contradiction.
Hence, we obtain the third required vertex $p_3$,
which completes the proof of Lemma 9.
\end{proof}

\begin{thm}
Three-color Ramsey number of the path $P_8$ satisfies
$$R_3(P_8)=14.$$
\end{thm}

\bigskip
\begin{proof}
We need to show that every $3$-edge coloring of $K_{14}$ contains
a monochromatic $P_8$. Let us suppose that there is a
$(P_8, P_8, P_8; 14)$-coloring $G$ with colors red, blue and green,
forming graphs $G^1$, $G^2$ and $G^3$, respectively.
Since $K_{14}$ has 91 edges, we may assume without loss of
generality that there are at least 31 red edges, i.e. $e(G^{1}) \ge 31$.

Since by (3) we have $w(14,5)=31$, it follows by Theorem 5 that
$G^{1}$ contains a cycle $C_k$ for some $k \geq 5$. One can
routinely verify that the critical graphs in this case
(described in \cite{CaV} and Appendix B) have $P_8$, and thus
$k \geq 6$. If $k \geq 8$, then we immediately
obtain a $P_8$, a contradiction.
If $k=7$, then to avoid a $P_8$ in $G^1$ we have a bipartite graph
$G'$ with two partite sets of order $7$. In order to avoid
monochromatic $P_8$ in $G^2$ and $G^3$,
by using row 3 in Table III,
we see that the graph $G'$ can contain at
most $48$ blue and green edges. At least one
remaining edge of $G'$ is red and together with the $C_7$
we have a red $P_8$, a contradiction.
Hence, in the rest of the proof we will assume that $G$
has a red $C_6$ with vertices  $C=\{c_1, c_2, ..., c_6\}$,
the remaining vertices are $P=\{p_1, p_2, ..., p_8\}$,
and $G$ has no red $C_k$ for any $k \ge 7$.

By Lemma \ref{lem02} we have
$m \ge 3$ vertices $M=\{p_1,\dots,p_m\} \subset P$
not on the red $C_6$, joined to it by some red edges.
Assuming that there is no red $P_8$,
one can easily see that there can be no red
edges $\{p_i,p_j\}$ with $p_i\in M$ and $p_j\in P$.
Hence $P$ induces at most ${8-m \choose 2}$ red edges.
To avoid red $C_7$, the vertices in $M$ can be joined
by at most 3 red edges each to $C$ (to vertices
nonadjacent on the cycle $C_6$).
We will be counting red edges in $C$, between $C$ and
$P$, and in $P$, similarly as in (4) and (5).

First, consider the case when all the vertices
in $M$ are connected to $C$ by at most 2 red edges each.
If at least one them is connected to 2 vertices in $C$,
then at least one of the edges induced by $C$ is not red,
or there are less than $2m$ edges between $C$ and $P$.
Hence, for all possible $3 \le m \le 8$, we have
$$e(G^1) \le {6 \choose 2}+2m-1+{8-m \choose 2}<31,$$
which gives a contradiction.

The remaining case is when some vertex, say $p_1$,
in $M$ is connected to $C$ by exactly 3 red edges,
and the red edges from $p_1$ to $C$ are $\{p_1, c_1\}$,
$\{p_1, c_3\}$, $\{p_1, c_5\}$.
Recall that now we can also assume that $G$
has no red $C_7$. Then no vertex
$p_i \in P$, $2 \le i \le 8$, can be joined by a red edge
to any of the vertices in the set $\{c_2, c_4, c_6\}$.
In addition, if the edge $\{p_2,c_1\}$ is red, then
the edges $\{c_2, c_4\}$, $\{c_2, c_6\}$, $\{c_4, c_6\}$
are blue or green. For example, if  $\{c_2, c_4\}$ is red,
then $p_2c_1c_2c_4c_3p_1c_5c_6$ is a red $P_8$. Similarly,
if $\{p_2,c_3\}$ or $\{p_2,c_5\}$ is red, then at least
the same three edges induced in $C$ must be blue or green.
In all cases, $C$ induces at most 12 red edges.

Observe that the set $M\cup \{c_2, c_4, c_6\}$ contains
only blue and green edges.
The latter and the known value $R(P_8,P_8)=11$
\cite{GeGy} imply that $m+3 \le 10$.
Note that if $m=7$, then the
sole vertex in $P \setminus M$ is also not in any red edge.
Therefore, $P\cup \{c_2, c_4, c_6\}$
contains only blue and green edges and, again
because of $R(P_8,P_8)=11$, there must be
a monochromatic $P_8$. Hence, we can assume that
$3 \le m \le 6$. This time we obtain
$$e(G^1) \le 12+3m+{8-m \choose 2}.\eqno{(6)}$$
Now
$e(G^1)$ can achieve 31 in (6) for $m=3$ and $m=6$,
furthermore only in cases when all (3 or 6) vertices in $M$
are connected by exactly 3 red edges to $C$. We will show
that in both cases $G$ has a blue or green $P_8$.

If $m=6$, then the equality in (6) implies that
$P\cup \{c_2, c_4, c_6\}$ contains exactly one red edge
in $P \setminus M$, or equivalently,
the $K_{11}-e$ with vertices $P\cup \{c_2, c_4, c_6\}$
has all its 54 edges blue or green. By Theorem 4 with
$k=7$, $t=1$ and $r=4$ we obtain $T(11,P_8)=27$. One
can easily check that it is not possible for two copies
of the corresponding extremal graphs to cover $K_{11}-e$.

The last situation to consider is that of $m=3$, where $G^1$
has two components: one spanned by 9 vertices of $C \cup M$
with 21 red edges and a red $K_5$ on vertices
$Q=P \setminus M =\{p_4,\dots,p_8\}$.
The set $H=M\cup \{c_2, c_4, c_6\}$ has no red edges.
Denote by $R$ the set $\{c_1, c_3, c_5\}$.
The 60 edges of $G^2\cup G^3$ form a complete $K_6$ on $H$
and a complete bipartite graph $K_{Q,H \cup R}$.
Let $P_l$ be the longest monochromatic, say blue, path in $H$,
and denote by $x$ and $y$ its endpoints. By Theorem 4 we have
$T(6,P_4)=6$, which implies that $l=6$ or $l=5$
(it is also implied by $R(P_5,P_5)=6$).
We have the following possibilities:

\bigskip
\noindent
{\bf Case 1.} There are no blue edges joining $x$ or $y$ to $Q$
(for $l=5$ or $l=6$).

\medskip
We have $H \cup R = C \cup M$, and let
$S = C \cup M \setminus \{x,y\}$. We consider the complete
bipartite graph $K_{5,7}$ with partite sets $Q$ and $S$.
Because all the edges from $x$ and $y$
to $Q$ are green, this $K_{Q,S}$ cannot have green $P_4$.
The third row of Table III with $a=5$,
$b=7$ and $c=1$, implies that there are at most
10 green edges between $Q$ and $S$.
Clearly, $K_{Q,S}$ cannot have blue $P_8$. We now
use the second row of the same Table III with $c=3$,
and see that there are at most 21 blue edges between
$Q$ and $S$. There are not enough green and blue
edges to cover all 35 edges of $K_{Q,S}$, which
is a contradiction.

\medskip
\noindent
{\bf Case 2.}
There is a blue edge from $x$ or $y$ to $Q$, say $\{x,p_4\}$, and $l=6$.

\medskip
Let the blue $P_l$ in $H$ be $xs_1s_2s_3s_4y$. If there is no blue $P_8$,
then all the edges joining $y$ to $p_i$, $5 \le i \le 8$, and joining
$p_4$ to $R$ are green. We consider the colors of the edges from
$s_4$ to the set $Q \setminus \{p_4\}=\{p_5, p_6, p_7, p_8\}$. 
This case is now broken into three subcases, as follows:

\begin{enumerate}
\item
There are at least two blue edges from $s_4$
to $Q \setminus \{p_4\}$, say
$\{s_4, p_5\}$ and $\{s_4, p_6\}$.
To avoid blue $P_8$ all the edges between
$\{p_5, p_6\}$ and $R$ must be green,
but in this case we have a green
$P_8=p_8yp_5c_1p_6c_3p_4c_5$.

\item
There is exactly one such blue edge, say $\{s_4, p_5\}$.
To avoid blue $P_8$ all the edges between
$p_5$ and $R$ must be green, but then we have a
green $P_8=c_1p_4c_3p_5yp_6s_4p_7$.

\item
All edges from $s_4$ to $\{p_5, p_6, p_7, p_8\}$ are green.
If there is a green edge between $R$ and $\{p_5, p_6, p_7, p_8\}$,
say $\{c_1,p_5\}$, then we have a green $p_7s_4p_6yp_5c_1p_4c_3$.
So, assume that all the edges from $R$ to
$\{p_5, p_6, p_7, p_8\}$ are blue.
If there is at least one blue edge from
$\{p_5, p_6, p_7, p_8\}$ to $\{s_2, s_3\}$, say $\{p_5, s_2\}$,
then we have a blue $p_4xs_1s_2p_5c_1p_6c_3$.
In the opposite case we obtain a green
$p_5s_2p_6s_3p_7s_4p_8y$.
\end{enumerate}

\medskip
\noindent
{\bf Case 3.}
There is a blue edge from $x$ to $Q$, say $\{x,p_4\}$, all the
edges from $y$ to $Q \setminus \{p_4\}$ are green, and $l=5$
(the edge $\{y,p_4\}$ can be blue or green).

\medskip
Let the blue $P_l$ in $H$ be $xs_1s_2s_3y$.
There is a vertex $z \in H \setminus \{x,y,s_1,s_2,s_3\}$,
such that the edges
$\{x,z\}$ and $\{y,z\}$ are green, since otherwise $l=6$.
This case is broken into three subcases, as follows:

\begin{enumerate}
\item
There are at least two blue edges from $p_4$ to $R$,
say  $\{p_4, c_1\}$ and $\{p_4, c_3\}$. When avoiding blue $P_8$,
we obtain a green $P_8=xzyp_8c_3p_7c_1p_6$. 

\item
There is exactly one blue edge from $p_4$ to $R$, say  $\{p_4,c_1\}$.
Then, if there is at least one green edge from $\{c_3,c_5\}$
to $Q \setminus \{p_4\}$, say $\{c_3, p_5\}$, then we have
green $P_8=acbp_8c_1p_5c_3p_4$.
In the opposite case we must have a blue complete blue
bipartite subgraph $K_{\{c_3,c_5\},\{p_5,p_6,p_7,p_8\}}$.
If there is at least one blue edge from
$\{p_5,p_6,p_7,p_8\}$ to $\{s_1,s_2,s_3\}$,
then we have a blue $P_8$, otherwise we easily find a
green $P_8$.

\item
All the edges from $p_4$ to $R$ are green.
Then, if there is at least one green edge from $R$
to $Q \setminus \{p_4\}$, say $\{c_1, p_5\}$, then
in order to avoid a green $P_8$, we must have a blue
complete bipartite $K_{\{c_3,c_5\}, \{p_6,p_7,p_8\}}$.
In the opposite case, we have a complete blue bipartite
subgraph $K_{\{c_1,c_3,c_5\},\{p_6,p_7,p_8\}}$.
If there is at least one blue edge from
$\{p_6,p_7,p_8\}$ to $\{s_1,s_2,s_3\}$,
then we have a blue $P_8$, otherwise we
have a green $P_8$.
\end{enumerate}

\medskip
\noindent
{\bf Case 4.}
There is a blue edge from $x$ to $Q$, say $\{x,p_4\}$,
there is a blue edge from $y$ to a different vertex in $Q$,
say $\{y,p_8\}$, and $l=5$.

\medskip
Let the blue $P_l$ in $H$ be $xs_1s_2s_3y$.
There is a vertex $z \in H \setminus \{x,y,s_1,s_2,s_3\}$,
such that the edges
$\{x,z\}$ and $\{y,z\}$ are green, since otherwise $l=6$.
All the edges from $\{p_4,p_8\}$ to $R \cup \{z\}$ are green.
If there are at least two green edges from a vertex
in $\{p_5,p_6,p_7\}$ to $R$, say $\{p_5,c_1\}$
and $\{p_5,c_3\}$, then we have a green $P_8$,
namely $c_5p_4c_1p_5c_3p_8zy$.
In the opposite case, we have a blue $P_4$, without
loss of generality, say $c_1p_5c_3p_6$.
To avoid a blue $P_8$, the edges $\{x,p_6\}$ and
$\{s_1,p_6\}$ are green, but then we have a green
$P_8=s_1p_6xzp_4c_1p_8c_5$.
\end{proof} 

\eject
\bigskip
It is interesting to observe that the case of
$R_3(P_8)$ required significantly more complex reasoning
than that of $R_3(P_9)$. We also tried to proceed for $P_{10}$
and $P_{11}$ similarly as in the proofs of Theorems 10 and 8,
respectively, but while the general method seems to be
applicable, the complexity of case analysis appears to
be quite harder. We did not complete these proofs.
In general, we expect that even paths
cases are harder than those for odd paths. Consequently, between
the first two open cases of Conjecture 2, namely the questions
whether it is true that $R_3(P_{10})=18$ and $R_3(P_{11})=21$,
we expect the former to be more difficult to prove.

\section*{Acknowledgements}
We are grateful to the anonymous reviewers whose detailed
comments greatly improved the proofs and general
presentation of this paper.

\medskip
\bigskip

\newpage
\section*{Appendix A}
In this appendix we present results obtained by Gy\'arf\'as,
Rousseau and Schelp \cite{GRS}.
Let $a, b, c$ be positive integers. The authors of \cite{GRS}
answered the question of what is the maximum number of edges $f_0(a,b,c)$
(resp. $f_1(a,b,c)$) in any $P_{2l}$-free (resp. $P_{2l+1}$-free),
for $l>c$, subgraph of the complete bipartite graph $K_{a,b}$ ($a\leq b$).
They also characterized all the corresponding extremal graphs.
Adjacency matrix ($a\times b$) of all extremal graphs
can be written in partitioned form as
$$G=
\begin{bmatrix}
    M_{11}       & M_{12} \\
    M_{21}       & M_{22} \\
\end{bmatrix},
$$ 
where each block in this partitioned matrix is either a matrix of
all 1's or a matrix of all 0's. Such a matrix is completely specified
by giving the size of $M_{11}$ ($s \times t$), and identifying
each $M_{ij}$ as a block of all 1's or all 0's.

Table I (resp. II) describe the constructions of all the extremal
graphs achieving $f_0(a,b,c)$ for $c \ge 1$
(resp. $f_1(a,b,c)$ for $c \ge 3$).
Table III (resp. IV) present formulas for
$f_0(a,b,c)$ for $c \ge 1$ (resp. $f_1(a,b,c)$ for $c \ge 2$).
In all four tables it is assumed that $a \le b$.

\vspace{1.5cm}

\setcounter{table}{0}

\bigskip
\begin{table}[h!]
\centering
\begin{tabular}{c|C{3cm}|C{0.9cm}|C{0.9cm}|C{0.9cm}|C{0.9cm}|C{0.9cm}|C{0.9cm}}
\hline
Graphs & Range & $s$ & $t$ & $M_{11}$ & $M_{12}$ & $M_{21}$ & $M_{22}$\\
\hline
\hline
$G_{01}$& $a\leq c$ & $a$ & $b$ & $1$ & & &  \\
\hline
$G_{02}$& $c<a \leq 2c$ & $c$ & $b$ & $1$ & & $0$ & \\
\hline
$G_{03}$& $a = 2c$ & $c$ & any & $1$ & $0$ & $0$ & $1$ \\
\hline
$G_{04}$& $a > 2c$ & $c$ & $b-c$ & $1$ & $0$ & $0$ & $1$  \\
\hline
\end{tabular}
\caption{Extremal $P_{2l}$-free subgraphs of
$K_{a,b}$, for $l > c \ge 1$ \cite{GRS}.}
\end{table}

\bigskip
\begin{table}[ht!]
\centering
\begin{tabular}{c|C{3.5cm}|C{.9cm}|C{.9cm}|C{.8cm}|C{.8cm}|C{.8cm}|C{.8cm}}
\hline
Graphs & Range & $s$ & $t$ & $M_{11}$ & $M_{12}$ & $M_{21}$ & $M_{22}$\\
\hline
\hline
&$a\leq c$ & & & & & &  \\
$G_{11}$&or & $a$ & $b$ & $1$ & & &  \\
&$a = b = c+1$ & & & & & &  \\
\hline
$G_{12}$&$a = c+1$ and \newline $b = c+2$ & $c+1$ & $c+1$ & $1$ & $0$ & & \\
\hline
$G_{13}$&$a=b$ and \newline $a=c+2$ & $c+1$ & $c+1$ & $1$ & $0$ & $0$ & $1$\\
\hline
&$b > a = c+1$ & & & & & &  \\
$G_{14}$&or & $c$ & $b-1$ & $1$ & $1$ & $0$ & $1$\\
&$c+1<a<2(c+1)$ & & & & & &  \\
\hline
&$a=2c+1$ & & & & & &  \\
$G_{15}$&or & $c+1$ & $c+1$ & $1$ & $0$ & $0$ & $1$\\
&$a = b = 2(c+1)$ & & & & & &  \\
\hline
&$b>a=2(c+1)$ & & & & & &  \\
$G_{16}$&or & $c$ & $b-c$ & $1$ & $0$ & $0$ & $1$\\
&$a > 2(c+1)$ & & & & & &  \\
\hline
\end{tabular}
\caption{Extremal $P_{2l+1}$-free subgraphs of
$K_{a,b}$, for $l > c \ge 3$ \cite{GRS}.}
\end{table}

\eject
In order to make Table II complete for $c=2$, two more
types of extremal graphs need to be defined, and
the special case of $c=1$ also can be easily handled
(see \cite{GRS}).
In this paper, for odd path we used only $c \ge 3$.
\vspace{1.5cm}

\begin{table}[h!]
\centering
\begin{tabular}{c|C{3cm}|C{3.5cm}}
\hline
No. & Range & $f_0(a,b,c)$\\
\hline
\hline
1 & $a\leq c$ & $ab$ \\
\hline
2 & $c<a<2c$ & $bc$ \\
\hline
3 & $a\geq 2c$ & $(a+b-2c)c$\\
\hline
\end{tabular}
\caption{Formulas for $f_0(a,b,c)$ for even paths, for $c \ge 1$ \cite{GRS}.}
\end{table}

\bigskip
\begin{table}[h!]
\centering
\begin{tabular}{c|C{4cm}|C{4cm}}
\hline
No. & Range & $f_1(a,b,c)$\\
\hline
\hline
1 & $a\leq c$ & $ab$ \\
\hline
2 & $a=b=c+1$ & $(c+1)^2$ \\
\hline
 & $b > a=c+1$ & \\
3 & or & $a+(b-1)c$\\
 & $c+1<a<2(c+1)$ & \\
\hline
4 & $a=b=2(c+1)$ & $2(c+1)^2$ \\
\hline
 & $b > a=2(c+1)$ & \\
5 & or & $(a+b-2c)c$\\
 & $a>2(c+1)$ & \\
\hline
\end{tabular}
\caption{Formulas for $f_1(a,b,c)$ for odd paths, for $c \ge 2$ \cite{GRS}.}
\end{table}

\vspace{1cm}
\section*{Appendix B}

In this appendix we present the characterization of all extremal
graphs achieving the bound $w(n,k)$, as described by Caccetta and
Vijayan in \cite{CaV}. The definition of $w(n,k)$ (3) with related
quantities, and Theorem~\ref{tw01} \cite{Woo,CaV} stating the bound
can be found in Section 3. These extremal graphs are needed in the
proof of our Theorem~\ref{tw08} in Section 4.

Let $G$ be any graph on $n$ vertices and $w(n,t)$ edges,
where $t=c(G)$ is the circumference of $G$, and let
$r= (n-1) \mod (t-1)$. Then $G$ is one of the graphs
described as follows.

\begin{itemize}
\item If $n=t$, then there exists one extremal graph, $K_t$,
\item If $n>t$ and $(r = t/2 - 1$ or $r = t/2)$,
then there exist two extremal graphs: 

\begin{enumerate}
\item[(1)]
$K_{t/2} + \overline{K_{n-t/2}}$, and

\item[(2)]
1-connected graph with $p-1$ cut-vertices between
consecutive blocks $B_1$, $B_2$,\dots,$B_p$,
$p= \big \lceil \frac{n-1}{t-1} \big \rceil$,
where $p-1$ blocks are equal to the graph $K_t$
and one block to the graph $K_{r+1}$,

\end{enumerate}

\item
If $n>t$, $r \neq t/2 - 1$ and $r \neq t/2$,
then there exists one extremal graph, namely
$K_1 + (\big \lfloor \frac{n-1}{t-1} \big \rfloor K_{t-1} \cup K_r)$.
\end{itemize}

\end{document}